\newcommand{\N}{\mathbf{N}}
\newcommand{\F}{\mathbf{F}}
\newcommand{\E}{\mathbf{E}}
\newcommand{\C}{\mathbf{C}}
\newcommand{\abs}[1]{\lvert #1 \rvert}
\newtheorem{thm}{Theorem}
\newtheorem{cor}[thm]{Corollary}
\newtheorem{lem}[thm]{Lemma}
\theoremstyle{definition}
\newtheorem*{defn}{Definition}
\newcommand{\Span}{\operatorname{Span}}
\title{Simplices over finite fields}
\author{Hans Parshall}
\subjclass[2010]{11T24, 05D10}
\begin{document}

\begin{abstract}

We prove that, provided $d > k$, every sufficiently large subset of $\F_q^d$ contains an isometric copy of every $k$-simplex that avoids spanning a nontrivial self-orthogonal subspace.  We obtain comparable results for simplices exhibiting self-orthogonal behavior.

\end{abstract}

\maketitle

\section{Introduction}

Density results in geometric Ramsey theory are concerned with finding geometric configurations that must appear in any sufficiently large subset of a vector space.  A striking example of this is the theorem of Katznelson and Weiss~\cite{katznelsonweiss}, proved using ergodic theory, which states that every set of positive upper density within $\mathbf{R}^2$ contains two points distance $\lambda$ apart for every sufficiently large $\lambda>0$.  Bourgain~\cite{bourgain} gave an alternate, Fourier-analytic proof yielding more: provided $d > k$, every set of positive upper density within $\mathbf{R}^d$ contains the vertex set of every sufficiently large dilate of any non-degenerate Euclidean $k$-simplex.  An analogous result requiring $d > 2k + 4$ for subsets of the integer lattice $\mathbf{Z}^d$ was proved by Magyar~\cite{magyar} using Fourier analysis and a variant of the circle method.  

The main result of this paper improves the comparable situation over a finite field $\mathbf{F}_q$ with odd characteristic. In particular, we show that when $d > 2k$, every sufficiently large subset of $\mathbf{F}_q^d$ contains an isometric copy of every $k$-simplex, where here by $k$-simplex we mean a set of $k + 1$ points $\{v_0, v_1, \ldots, v_k\} \subseteq \mathbf{F}_q^d$ for which the vectors $\{v_j - v_0\}_{j = 1}^k$ are linearly independent.  Our proof is inspired by the simplified Fourier analytic argument for Bourgain's theorem presented by Lyall and Magyar \cite{lyall-magyar}, which relies on the well-known decay of the Fourier transform for the surface measure of the Euclidean unit sphere.  In our setting of $\mathbf{F}_q^d$, we will define weighted indicator functions associated to a reference simplex which should be thought of as spherical measures, and we find it necessary to carefully compute asymptotics for their Fourier transforms.  Kloosterman sums play a role in the estimation of our error terms.

The situation in $\F_q^d$ is somewhat delicate, since we will be working with an isotropic measurement of length.  To be clear, with $v,w \in \F_q^d$, we are always working with the usual dot product, defined by $v \cdot w := \sum_{j = 1}^d v_jw_j$, and we call $|v|^2 := v \cdot v$ the length of $v$.  This notion of length was studied, for example, by Iosevich and Rudnev \cite{iosevich-rudnev}, who study a finite field analogue of the Falconer distance problem and prove a satisfying finite field analogue to the aforementioned theorem of Katznelson and Weiss.  Other authors have worked with this length to prove variants of Bourgain's theorem in $\mathbf{F}_q^d$; see especially \cite{cehik}, \cite{hart-iosevich}, and \cite{vinh}.

We say two $k$-simplices $\Delta_k, \Delta_k' \subset \F_q^d$ are isometric, and write $\Delta_k \simeq \Delta_k'$ if they can be ordered 
\begin{align*}
\Delta_k &= \{v_0, v_1, \ldots, v_k\}\\
\Delta_k' &= \{y_0, y_1, \ldots, y_k\}
\end{align*}
so that dot products are preserved, in the sense that
\begin{equation}\label{dotProducts}
	(v_i - v_0) \cdot (v_j - v_0) = (y_i - y_0) \cdot (y_j - y_0)
\end{equation}
for all $1 \leq i \leq j \leq k$.  To the $k$-simplex $\Delta_k = \{v_0, v_1, \ldots, v_k\} \subset \F_q^d$, we associate the subspace 
\[
	V = \Span(v_1 - v_0, \ldots, v_k - v_0)
\]
and define its orthogonal complement as usual by 
\[
	V^\perp = \{w \in \F_q^d : v \cdot w = 0 \text{ for all } v \in V\}.
\]
We define the rank of $\Delta_k$ to be the quantity $k - \dim(V \cap V^\perp)$; it is easy to check that isometric $k$-simplices share the same rank.  When $\Delta_k$ has rank $k$, the only self-orthogonal element of $V$ is the zero vector, and we call $\Delta_k$ full rank.  It seems that self-orthogonality is the main obstruction to simplices behaving as one would expect when comparing to the Euclidean case, where all simplices are full rank.

Results in the direction we are heading have mostly restricted their attention to $k$-simplices $\Delta_k = \{v_0, v_1, \ldots, v_k\}$ for which $|v_j - v_i|^2 \neq 0$ for all $0 \leq i < j \leq k$, a condition that is certainly implied by $\Delta_k$ having full rank.  It was shown by Hart and Iosevich \cite{hart-iosevich} that when $d > \binom{k + 1}{2}$ and $C > 0$ is taken sufficiently large with respect to $k$, then every $A \subseteq \F_q^d$ with $|A| = \alpha q^d$ contains an isometric copy of every full rank $k$-simplex provided $\alpha \geq C q^{k/2 - d/(k + 1)}$.  This was subsequently improved by Vinh \cite{vinh}, where he was able to allow for $d \geq 2k$ and $\alpha \geq Cq^{k - (d + 1)/2}$.  Our main result here allows us to locate $k$-simplices of any rank, with the relationship between $d$ and $k$ improving for higher ranks.

\begin{thm}\label{mainResult}
	Let $k \geq 1$, $0 \leq r \leq k$, and $d > 2k - r$. Let $\Delta_k \subset \F_q^d$ be a $k$-simplex with rank $r$, and let $\alpha \geq Cq^{(2k - d - r)/(k + 1)}$ for some sufficiently large constant $C > 0$ depending only on $k$.  Then for any set $A \subseteq \F_q^d$ with $|A| = \alpha q^d$, the number of $k$-simplices $\Delta_k' \subset A$ with $\Delta_k' \simeq \Delta_k$ is equal to
	\[
		\Big(\alpha^{k + 1} + O(\alpha^{(k + 1)/2}q^{k - (d + r)/2})\Big) q^{(k + 1)d - \binom{k + 1}{2}}.
	\]
\end{thm}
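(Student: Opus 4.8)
The plan is to count isometric copies of $\Delta_k$ via a weighted counting operator, bootstrapping from a single vertex to the full simplex by adding one vertex at a time. For the reference simplex $\Delta_k = \{v_0, \ldots, v_k\}$, let $t_{ij} = (v_i - v_0)\cdot(v_j - v_0)$ be the prescribed Gram data, and define for each $\ell \le k$ a measure-like weight on $\ell$-tuples $(x_1, \ldots, x_\ell)$ that picks out configurations with the correct partial Gram matrix $(t_{ij})_{1 \le i,j \le \ell}$. The key analytic objects are the normalized indicator functions $\sigma_{S}$ of "spheres" $\{x : |x|^2 = \lambda, \ x \cdot u_i = c_i\}$ — the locus of a new vertex given the previous ones — and the heart of the matter is the Fourier transform of such $\sigma_S$. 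Using standard Gauss sum evaluations one expects $\widehat{\sigma_S}(\xi)$ to be $q^{-1}$ (a main term from $\xi$ in the span of the constraints, roughly) plus an oscillatory term of size about $q^{-(d - \ell)/2}$ times a quadratic exponential, and the crucial point — where self-orthogonality enters — is that this decay estimate degrades exactly on the self-orthogonal part of $V$, so that working with a rank-$r$ simplex one effectively gets $q^{-(d+r)/2}$-type savings after accounting for all $k$ steps rather than $q^{-d/2}$.

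**The main steps, in order:** (1) Reduce to an $(k+1)$-fold sum $\sum_{x_0}\sum_{x_1}\cdots\sum_{x_k} \mathbf{1}_A(x_0)\mathbf{1}_A(x_0+x_1)\cdots\mathbf{1}_A(x_0+x_1+\cdots) $ weighted by $\prod_\ell \sigma_{\ell}$, where $\sigma_\ell$ is the relevant sphere measure attached to the $\ell$-th vertex relative to the earlier ones; write this as an integral of $A$ against an iterated convolution. (2) Expand each $\sigma_\ell$ on the Fourier side as "main term $+$ error" and multiply out, obtaining a main term $\alpha^{k+1} q^{(k+1)d - \binom{k+1}{2}}$ together with $2^{k+1} - 1$ error terms indexed by which factors contribute their oscillatory part. (3) Bound the main term: the product of all the main-term pieces, integrated against $\mathbf{1}_A$ repeatedly, gives exactly $\alpha^{k+1}$ up to the combinatorial normalization $q^{(k+1)d - \binom{k+1}{2}}$, which counts the total number of ordered isometric copies in all of $\F_q^d$. (4) Bound each error term: by Cauchy–Schwarz / Plancherel, each surviving oscillatory factor contributes a gain, and here one must carefully track how the quadratic form in the exponential interacts with the subspace $V\cap V^\perp$; the Kloosterman-sum estimate (Weil bound) is invoked to control the resulting character sums over the off-diagonal, yielding the stated $O(\alpha^{(k+1)/2} q^{k - (d+r)/2})$ after optimizing. (5) Finally check that the hypothesis $d > 2k - r$ together with $\alpha \ge C q^{(2k-d-r)/(k+1)}$ makes the error term genuinely smaller than the main term, so the count is positive.

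**The hard part will be** step (4), and within it the precise Fourier analysis of the sphere measures $\sigma_\ell$ when the constraint vectors $u_1, \ldots, u_{\ell-1}$ span a subspace meeting its own orthogonal complement nontrivially. In that degenerate case the system defining the sphere can be inconsistent or of the "wrong" size, the Gauss sum evaluation picks up extra factors of $q^{1/2}$, and one has to argue that these are exactly compensated by the rank bookkeeping so that the total loss over the $k$ steps is $q^{r/2}$ and no worse. Getting the error term to be uniform over all rank-$r$ simplices — not just those with all side lengths nonzero — is the technical crux, and it is where the computation of asymptotics for the Fourier transforms, flagged in the introduction, does the real work. The combinatorics of multiplying out $2^{k+1}$ terms and re-summing is routine by comparison, as is the final arithmetic in step (5).
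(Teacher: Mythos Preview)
Your proposal has the right ingredients --- spherical measures, Fourier decay governed by rank, Cauchy--Schwarz/Plancherel --- and you correctly identify the Fourier asymptotics in the self-orthogonal case as the crux. But the decomposition strategy differs from the paper's and, as stated, step (3) has a gap. The paper does \emph{not} expand each $\widehat{\sigma_\ell}$ into ``main $+$ error'' and multiply out into $2^{k+1}$ terms. Instead it proceeds by induction on $j$ (its Theorem~\ref{gvnLemma}): at each step one writes $1_A = \alpha\mathbf{1} + f_A$ in the \emph{last} argument only, so that $\mathcal{S}_j(1_A,\ldots,1_A) = \alpha\,\mathcal{S}_j(1_A,\ldots,1_A,\mathbf{1}) + \mathcal{S}_j(1_A,\ldots,1_A,f_A)$. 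The first term reduces to $\mathcal{S}_{j-1}$ via $\E_{y_j}\sigma_{y_1,\ldots,y_{j-1}}(y_j) = 1 + O(q^{j-(d+r_j)/2})$; the second is bounded by a single Cauchy--Schwarz followed by Plancherel. This avoids your $2^{k+1}$-term expansion entirely and is exactly the Lyall--Magyar scheme the introduction advertises.

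The gap in your step (3) is that the ``main term'' of $\widehat{\sigma}_{y_1,\ldots,y_{j-1}}(\xi)$ is $\delta_{y_1,\ldots,y_{j-1}}(\xi)$, the indicator of the whole span of the previous vertices, not just $\delta(\xi)$. So the product of main-term pieces does not directly yield $\alpha^{k+1}$; one must still control the contribution from nonzero $\xi$ lying in that span, where the oscillatory factor gives no decay at all. The paper handles precisely this via a separate estimate (Lemma~\ref{errorlemma}) showing $\E_{y_1,\ldots,y_{j-1}}^* S_{j-1}(y_1,\ldots,y_{j-1})\,|\widehat{\sigma}_{y_1,\ldots,y_{j-1}}(\xi)|^2 = O(q^{2j-d-r_j})$ for $\xi\neq 0$, the point being that for a \emph{fixed} nonzero $\xi$ the event $\xi \in \Span(y_1,\ldots,y_{j-1})$ is itself rare. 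This lemma is the piece your outline is missing. A smaller point: in the low-rank case $r_j < j$ the paper abandons the Gauss/Kloosterman route and instead uses Weyl differencing (Lemma~\ref{degenerateAsymptotic}) to obtain the $O(q^{j-(d+r_j)/2})$ decay, since the orthogonal-basis manipulation needed for~\eqref{gaussSumIdentity} breaks down when the span meets its own orthogonal complement.
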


As an immediate corollary, we achieve the following for full rank simplices.

\begin{cor}\label{nondegenerateImprovement}
	Let $d > k \geq 1$ and let $\alpha \geq Cq^{(k - d)/(k + 1)}$ for some sufficiently large constant $C > 0$ depending only on $k$.  If $A \subseteq \F_q^d$ with $|A| = \alpha q^d$, then $A$ contains an isometric copy of every full rank $k$-simplex.
\end{cor}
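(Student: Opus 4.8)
This is a formal consequence of \Cref{mainResult}, so the proof amounts to unwinding that statement in the full rank case; I expect no real obstacle beyond keeping track of the constant. A $k$-simplex is full rank exactly when it has rank $r = k$, and with this choice the hypothesis $d > k$ is precisely the required $d > 2k - r$, while the density assumption $\alpha \geq Cq^{(k-d)/(k+1)}$ is precisely $\alpha \geq Cq^{(2k-d-r)/(k+1)}$. Applying the theorem to any $A \subseteq \F_q^d$ with $|A| = \alpha q^d$ and any fixed full rank $k$-simplex $\Delta_k$, the number of $\Delta_k' \subseteq A$ with $\Delta_k' \simeq \Delta_k$ equals
\[
	\Big(\alpha^{k+1} + O\big(\alpha^{(k+1)/2} q^{k - (d+k)/2}\big)\Big) q^{(k+1)d - \binom{k+1}{2}}.
\]

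To conclude I would simply check that this count is strictly positive, which forces at least one isometric copy of $\Delta_k$ to lie in $A$. Writing $k - (d+k)/2 = (k-d)/2$, the main term dominates the error term as soon as $\alpha^{(k+1)/2} \gg q^{(k-d)/2}$, equivalently $\alpha \gg q^{(k-d)/(k+1)}$; so taking the constant $C$ large enough in terms of $k$ to absorb the implied constant coming from \Cref{mainResult} makes the count at least $\tfrac12 \alpha^{k+1} q^{(k+1)d - \binom{k+1}{2}} > 0$. Since $\Delta_k$ was an arbitrary full rank $k$-simplex, $A$ contains an isometric copy of every one of them. The only point needing care is that the constant $C$ in the corollary is permitted to be a fixed multiple of the one furnished by the theorem, which is harmless since both depend only on $k$.
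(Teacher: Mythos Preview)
Your proof is correct and matches the paper's approach: the paper states the corollary as an ``immediate corollary'' of \cref{mainResult} without further argument, and your proposal is precisely the routine verification that specializing to $r = k$ yields the stated hypotheses and a positive count. There is nothing to add.
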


\cref{mainResult} is optimal in the sense that one would expect the number of $k$-simplices isometric to a fixed $\Delta_k$ within a uniformly random set $A \subseteq \F_q^d$ to be equal to $|A|^{(k + 1)} q^{-\binom{k + 1}{2}}$.  The condition $d > 2k - r$ does not leave much room for potential improvement.  That is, suppose $\Delta_k = \{v_0, v_1, \ldots, v_k\} \subset \F_q^d$ is a $k$-simplex of rank $r$.  Then setting $V = \Span(v_1 - v_0, \ldots, v_k - v_0)$, we have $\dim(V \cap V^\perp) = k - r$.  Certainly $V \cap V^\perp \subseteq V^\perp$, forcing the dimension relationship $k - r \leq d - k$ which establishes $d \geq 2k - r$ in every case.  Without restrictions on our parameters, this bound is sharp, since we need to account for the possibility of a subspace $W \subset \F_q^{2k - 2r}$ with $\dim(W) = k - r$ and $W = W^\perp$.  Such a subspace exists, for example, when $-1$ is a square modulo $q$.  One can then construct a $k$-simplex of rank $r$ in $\F_q^{2k - r} = \F_q^{r} \times \F_q^{2k - 2r}$ using the $r$ standard basis vectors for $\F_q^r \times \{0\}$ and the $k - r$ basis vectors for $\{0\} \times W$.  Hence, we should be attempting to handle $k$-simplices of rank $r$ whenever $d \geq 2k - r$.

This paper is organized as follows.  In Section 2, we provide some notation and background results that will be required as the paper unfolds.  In Section 3, we compute asymptotics for some spherical measures that will allow us to count $k$-simplices of a given isometry class in the final section.

\section{Setup}

Here we list some notation and facts that we will use as we progress. For any function $f : \F_q^d \rightarrow \C$, we will use the notation
\[
	\E_x f(x) := q^{-d} \sum_{x \in \F_q^d} f(x)
\]
for the average of $f$ over $\F_q^d$.  Let $\chi$ denote the canonical additive character of $\F_q$.  Then for $y \in \F_q^d$, we have the usual orthogonality relationship of
\[
	\E_x \chi(y \cdot x) = \begin{cases} 1 \text{ if } y = 0 \\ 0 \text{ otherwise} \end{cases}
\]
Letting $f,g : \F_q^d \rightarrow \C$, we define the Fourier transform $\widehat{f} : \F_q^d \rightarrow \C$ by
\[
	\widehat{f}(\xi) := \E_x f(x) \chi(-\xi \cdot x)
\]
and we recall the Fourier inversion formula
\[
	f(x) = \sum_{\xi \in \F_q^d} \widehat{f}(\xi) \chi(\xi \cdot x)
\]
and Plancherel's identity
\[
	\E_x f(x)\overline{g(x)} = \sum_{\xi \in \F_q^d} \widehat{f}(\xi)\overline{\widehat{g}(\xi)}.
\]
Defining the convolution $f * g(x) = \E_y f(y)g(x - y)$, we recall $\widehat{f * g}(\xi) = \widehat{f}(\xi)\widehat{g}(\xi)$.

Let $\eta$ denote the quadratic multiplicative character of $\F_q$.  We will require the following character sum estimate.

\begin{lem}\label{kloostermanEstimate}
	For $n \in \N$, $a \in \F_q^*$, and $b \in \F_q$,
	\[
	\Big| \sum_{s \in \F_q^*} \eta(s)^n \chi(as + b/s) \Big| = O(q^{1/2}).
	\]
\end{lem}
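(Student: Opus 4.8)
The plan is to split according to the parity of $n$, exploiting that $\eta$ takes only the values $\pm 1$ on $\F_q^*$, so that $\eta(s)^n$ depends only on $n \bmod 2$. If $n$ is even, then $\eta(s)^n = 1$ for every $s \in \F_q^*$ and the sum collapses to $\sum_{s \in \F_q^*} \chi(as + b/s)$. When $b = 0$ this equals $\sum_{s \in \F_q^*} \chi(as) = -1$ by orthogonality, which is $O(1)$; when $b \neq 0$ it is a Kloosterman sum, and I would quote Weil's bound (the Riemann hypothesis for the associated curve) to conclude that its modulus is at most $2q^{1/2}$.

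If $n$ is odd, then $\eta(s)^n = \eta(s)$, and one must estimate $\sum_{s \in \F_q^*} \eta(s)\chi(as + b/s)$. When $b = 0$, the substitution $s \mapsto s/a$ identifies this with $\eta(a)$ times the quadratic Gauss sum $\sum_{s \in \F_q} \eta(s)\chi(s)$, whose modulus is exactly $q^{1/2}$. When $b \neq 0$, this is a Sali\'e sum, and here the plan is to invoke its classical closed form: for odd $q$ it equals $\eta(a)$ times the quadratic Gauss sum times $\sum_{w^2 = ab} \chi(2w)$, a factor with at most two summands of modulus $1$, so the Sali\'e sum has modulus at most $2q^{1/2}$. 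Alternatively, one can bypass the explicit evaluation and appeal to Weil's general estimate for mixed character sums $\sum_s \psi(s)\chi(f(s))$ with $\psi$ a nontrivial multiplicative character and $f$ a rational function whose zeros and poles are under control, which again gives $O(q^{1/2})$ with an absolute implied constant.

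Assembling these four cases yields the stated bound $O(q^{1/2})$, uniformly in $a$, $b$, and $n$. No new ideas enter: the only substantial inputs are the Weil bound for Kloosterman sums and the evaluation of (or Weil bound for) Sali\'e sums, both standard. I expect the main friction to be purely bibliographic and notational --- locating a reference that treats a general prime power $q = p^m$ rather than only prime $q$, and matching the normalization of the Sali\'e formula to the conventions for $\chi$ and $\eta$ fixed above. Since only the order of magnitude $q^{1/2}$ is needed, with no concern for sharp constants, even these points are routine.
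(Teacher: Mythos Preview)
Your proposal is correct and follows essentially the same approach as the paper: both split into four cases according to the parity of $n$ and whether $b = 0$, identifying the sum respectively as $-1$ by orthogonality, a Kloosterman sum, a Gauss sum, or a Sali\'e sum, and then invoking the standard $O(q^{1/2})$ bounds. The paper simply cites references for each case, while you sketch a bit more detail, but the structure and inputs are the same.
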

\begin{proof}
If $n$ is even and $b = 0$, orthogonality actually provides $\sum_{s \in \F_q^*} \chi(as) = -1$, but this savings will not be important to us. If $n$ is even and $b \neq 0$, this is a standard Kloosterman sum estimate; see Theorem 5.45 of \cite{finitefields}.  When $n$ is odd, we are either considering a Gauss sum when $b = 0$ or a Sali\'{e} sum when $b \neq 0$, and one could consult Theorem 5.15 of \cite{finitefields} or the proof of Lemma 12.4 of \cite{iwaniec-kowalski}, respectively.
\end{proof}
We will also repeatedly require the character sum identity
\begin{equation}\label{gaussSumIdentity}
	\sum_{x \in \F_q^d} \chi(a|x|^2 + b \cdot x) = G_q^d \eta(a)^d \chi(-|b|^2/4a),
\end{equation}
where $G_q$ is a complex number depending only on $q$ that satisfies $|G_q| = \sqrt{q}$.  This identity follows, for example, from Theorem 5.33 of \cite{finitefields}.

For the remainder, we fix $d > 2k - r$, a reference $k$-simplex of rank $r$
\[
	\Delta_k := \{0, v_1, \ldots, v_k\} \subset \F_q^d,
\]
and write $\Delta_j = \{0, v_1, \ldots, v_j\}$ for $1 \leq j \leq k - 1$.  Note there is no harm in taking $v_0 = 0$, since the condition \eqref{dotProducts} is translation invariant.  By reordering the vectors if necessary, we can insist that $\Delta_j$ has rank $r_j$ where
\[
	r_j := \begin{cases} j & \text{if } 0 \leq j \leq r \\ r & \text{if } r \leq j \leq k \end{cases}
\]
Whenever we write that two ordered $k$-simplices are isometric, we mean that \eqref{dotProducts} is satisfied with the given orderings. Finally, we agree that constants implied by big-O notation will often depend on the dimension of the reference simplex, $k$, but never on any of our other parameters.  

\section{Measures Associated to the Reference Simplex}

In this section, we study the following weighted measures associated to $\Delta_k$.

\begin{defn}
	For $y_1, \ldots, y_k \in \F_q^d$, define
	\begin{align*}
		\sigma(y_1) &:= \begin{cases} q & \text{if } |y_1|^2 = |v_1|^2 \\ 0 & \text{otherwise} \end{cases}\\
		\sigma_{y_1, \ldots, y_{j - 1}}(y_j)&:=\begin{cases} q^j & \text{if } y_i \cdot y_j = v_i \cdot v_j \text{ for all } 1 \leq i \leq j \\
		0 & \text{otherwise} \end{cases}
	\end{align*}
\end{defn}
	These can be used to detect whether a $k$-simplex $\Delta_k' = \{0, y_1, \ldots, y_k\} \subset \F_q^d$ is isometric to $\Delta_k$, in the sense that
\[
	\sigma(y_1)\sigma_{y_1}(y_2) \cdots \sigma_{y_1, \ldots, y_{k - 1}}(y_k) = \begin{cases} q^{\binom{k + 1}{2}} & \text{if } \Delta_k' \simeq \Delta_k \\
	0 & \text{otherwise}
	\end{cases}
\]
Our choice of weights may look strange initially, but we find it convenient for these measures to be (essentially) $L^1$-normalized.  Roughly, we will be using the measures $\sigma_{y_1, \ldots, y_{j - 1}}$ to inductively count $j$-simplices isometric to $\Delta_j$ by counting how many points ``complete'' each $(j - 1)$-simplex isometric to $\Delta_{j - 1}$.  This will eventually reduce to counting how many vectors $y_1 \in \F_q^d$ have a fixed length $|v_1|^2 \in \F_q$, for which we use $\sigma$.  For $\xi \in \F_q^d$, we define
\[
	\delta(\xi) := \begin{cases} 1 & \text{if } \xi = 0 \\ 0 & \text{otherwise} \end{cases}
\]
and record the standard estimate for $\widehat{\sigma}$.
\begin{lem}\label{sphereAsymptotic}
	If $|v_1|^2 \neq 0$, then
	\[
		\widehat{\sigma}(\xi) = \delta(\xi) + O(q^{(1 - d)/2}).
	\]
\end{lem}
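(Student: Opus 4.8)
The plan is to compute $\widehat{\sigma}(\xi)$ directly from the definition and extract the main term $\delta(\xi)$, estimating the remainder with the Gauss sum identity \eqref{gaussSumIdentity}. First I would write $\sigma(y_1) = q \cdot q^{-d}\sum_{t \in \F_q}\chi(t(|y_1|^2 - |v_1|^2))$ using the orthogonality relation, so that
\[
	\widehat{\sigma}(\xi) = \E_{y_1}\sigma(y_1)\chi(-\xi \cdot y_1) = q^{1 - d}\sum_{t \in \F_q}\chi(-t|v_1|^2)\,\E_{y_1}\chi(t|y_1|^2 - \xi \cdot y_1).
\]
The $t = 0$ term contributes $q^{1-d}\,\E_{y_1}\chi(-\xi \cdot y_1) = q^{1-d}\,\delta(\xi)$, which is not quite $\delta(\xi)$; so in fact the clean statement comes from noting that $\sum_{y_1}$ rather than $\E_{y_1}$ is the right normalization and recomputing — more precisely, $\widehat{\sigma}(\xi) = q^{1-d}\sum_t \chi(-t|v_1|^2)\sum_{y_1 \in \F_q^d}\chi(t|y_1|^2 - \xi\cdot y_1) \cdot q^{-d}$, wait — I would just carefully track constants so that the $t=0$ contribution is exactly $\delta(\xi)$, which it is once one observes $q \cdot q^{-d} \cdot q^{d}\delta(\xi)\big/q^{?}$ balances; the bookkeeping here is routine but must be done honestly.

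For the terms with $t \neq 0$, apply \eqref{gaussSumIdentity} with $a = t$, $b = -\xi$ to get $\sum_{y_1 \in \F_q^d}\chi(t|y_1|^2 - \xi\cdot y_1) = G_q^d\,\eta(t)^d\,\chi(-|\xi|^2/4t)$, which has absolute value exactly $q^{d/2}$. Thus the $t \neq 0$ part of $\widehat{\sigma}(\xi)$ equals $q^{1-d}\cdot q^{-d}\cdot G_q^d\sum_{t \in \F_q^*}\eta(t)^d\,\chi\big(-t|v_1|^2 - |\xi|^2/(4t)\big)$ up to the constant normalization, and the inner sum over $t \in \F_q^*$ is precisely of the form controlled by \cref{kloostermanEstimate} with $n = d$, $a = -|v_1|^2 \in \F_q^*$ (here is where $|v_1|^2 \neq 0$ is used), and $b = -|\xi|^2/4$. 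That lemma gives a bound of $O(q^{1/2})$ for the inner sum, so the $t \neq 0$ contribution is $O(q^{1-d}\cdot q^{d/2}\cdot q^{1/2} \cdot q^{-d}\cdot q^{?})$; again after getting the normalization right this collapses to $O(q^{(1-d)/2})$, matching the claim.

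The main obstacle is purely one of normalization bookkeeping: the factor of $q$ in the definition of $\sigma$, the $q^{-d}$ inside $\E_x$, and the magnitude $|G_q^d| = q^{d/2}$ must be combined so that the $t=0$ term is exactly $\delta(\xi)$ and the error is exactly $O(q^{(1-d)/2})$ rather than off by a power of $q$ — there is no analytic difficulty beyond invoking \eqref{gaussSumIdentity} and \cref{kloostermanEstimate}. One should double-check that when $|\xi|^2 = 0$ (including $\xi = 0$) the argument $b = -|\xi|^2/4 = 0$ case of \cref{kloostermanEstimate} still applies, which it does since that lemma is stated for all $b \in \F_q$; and that the $\eta(t)^d$ factor causes no trouble whether $d$ is even or odd, which is again exactly why \cref{kloostermanEstimate} was stated uniformly in the parity of $n$.
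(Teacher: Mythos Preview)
Your approach is correct and is exactly the standard computation; the paper itself does not give a proof but simply cites Hart--Iosevich, and what you have sketched is precisely that argument (and is the $j=1$ case of the method used in \cref{transformAsymptotic} and \cref{zeroSphereAsymptotic}).

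The only issue is the bookkeeping slip you flagged: the orthogonality expansion of the single scalar condition $|y_1|^2 = |v_1|^2$ gives a factor of $q^{-1}$, not $q^{-d}$, so
\[
	\sigma(y_1) = q \cdot q^{-1} \sum_{t \in \F_q} \chi\big(t(|y_1|^2 - |v_1|^2)\big) = \sum_{t \in \F_q} \chi\big(t(|y_1|^2 - |v_1|^2)\big).
\]
With this correction the $t=0$ term is $q^{-d}\sum_{y_1}\chi(-\xi\cdot y_1) = \delta(\xi)$ on the nose, and the $t \neq 0$ terms give $q^{-d}G_q^d\sum_{t \in \F_q^*}\eta(t)^d\chi(-t|v_1|^2 - |\xi|^2/4t)$, which is $O(q^{-d}\cdot q^{d/2}\cdot q^{1/2}) = O(q^{(1-d)/2})$ by \cref{kloostermanEstimate}, exactly as you say.
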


\begin{proof}
	See, for example, Lemma 3.3 in \cite{hart-iosevich}.
\end{proof}

The proof for \cref{sphereAsymptotic} serves as a guide for our first asymptotic computation, for which we make the definition
\[
	\delta_{y_1, \ldots, y_{j - 1}}(\xi) := \begin{cases} 1 & \text{if } \xi \in \Span(y_1, \ldots, y_{j - 1}) \\ 0 & \text{otherwise} \end{cases}
\]

\begin{lem}\label{transformAsymptotic}
	If $2 \leq j \leq k$, $r_j = j$, and the ordered $(j - 1)$-simplex $\{0, y_1, \ldots, y_{j - 1}\}$ is isometric to $\Delta_{j - 1}$, then
	\[
		|\widehat{\sigma}_{y_1, \ldots, y_{j - 1}}(\xi)| = \delta_{y_1, \ldots, y_{j - 1}}(\xi) + O(q^{(j - d)/2}).
	\]
\end{lem}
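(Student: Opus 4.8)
The plan is to expand $\sigma_{y_1,\ldots,y_{j-1}}$ into additive characters, compute its Fourier transform, and separate a main term governed by $\delta_{y_1,\ldots,y_{j-1}}$ from an error term handled by \cref{kloostermanEstimate}. Writing each of the $j$ defining equations $y_i\cdot y_j = v_i\cdot v_j$ (the last being $\abs{y_j}^2 = \abs{v_j}^2$) through the orthogonality relation for $\chi$, the weights $q^j$ cancel the normalizations and we obtain
\[
	\sigma_{y_1,\ldots,y_{j-1}}(y_j) = \sum_{s \in \F_q}\ \sum_{t_1,\ldots,t_{j-1} \in \F_q}\chi\Big(s\big(\abs{y_j}^2 - \abs{v_j}^2\big) + \sum_{i=1}^{j-1}t_i\big(y_i\cdot y_j - v_i\cdot v_j\big)\Big),
\]
so that, after interchanging summations,
\[
	\widehat{\sigma}_{y_1,\ldots,y_{j-1}}(\xi) = \sum_{s,\,t_1,\ldots,t_{j-1}}\chi\Big(-s\abs{v_j}^2 - \sum_i t_i\,v_i\cdot v_j\Big)\,\E_{y_j}\chi\Big(s\abs{y_j}^2 + \Big(\sum_i t_i y_i - \xi\Big)\cdot y_j\Big).
\]
I would then evaluate the inner average: by orthogonality the $s=0$ terms contribute $\delta\big(\sum_i t_i y_i - \xi\big)$, while by \eqref{gaussSumIdentity} the $s\neq 0$ terms contribute $q^{-d}G_q^d\eta(s)^d\chi\big(-\abs{\sum_i t_i y_i - \xi}^2/4s\big)$. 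This writes $\widehat{\sigma}_{y_1,\ldots,y_{j-1}}(\xi) = M(\xi) + E(\xi)$, the main and error terms.

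For the main term, since $\{0,y_1,\ldots,y_{j-1}\}$ is a $(j-1)$-simplex its difference vectors $y_1,\ldots,y_{j-1}$ are linearly independent, so a $\vec t$ with $\sum_i t_i y_i = \xi$ exists (and is then unique) precisely when $\xi \in \Span(y_1,\ldots,y_{j-1})$; for that $\vec t$ the summand $\chi(-\sum_i t_i\,v_i\cdot v_j)$ is unimodular. Hence $\abs{M(\xi)} = \delta_{y_1,\ldots,y_{j-1}}(\xi)$ exactly.

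The substance is the bound $\abs{E(\xi)} = O(q^{(j-d)/2})$. Expanding $\abs{\sum_i t_i y_i - \xi}^2$ and using the isometry $\{0,y_1,\ldots,y_{j-1}\}\simeq\Delta_{j-1}$ to replace $y_i\cdot y_{i'}$ by $v_i\cdot v_{i'}$, the phase in $E(\xi)$ becomes, for each fixed $s\in\F_q^*$, a quadratic polynomial in $\vec t\in\F_q^{j-1}$ whose quadratic part has matrix $-\tfrac{1}{4s}G$, where $G = (v_i\cdot v_{i'})_{1\le i,i'\le j-1}$ is the Gram matrix of $v_1,\ldots,v_{j-1}$. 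Since $r_j = j$ forces $j\le r$, the simplex $\Delta_{j-1}$ has full rank $r_{j-1}=j-1$, so $G$ is nonsingular and the sum over $\vec t$ is a nondegenerate Gauss sum: diagonalizing and applying \eqref{gaussSumIdentity} coordinatewise, it equals $G_q^{j-1}$ times a unimodular constant (a fixed sign times $\eta(\det G)$) times a phase $\chi(As+B+C/s)$ with $A,B,C\in\F_q$ depending only on $\xi$ and $\Delta_k$. Consequently
\[
	\abs{E(\xi)} \ll q^{-d}\abs{G_q}^{d+j-1}\Big|\sum_{s\in\F_q^*}\eta(s)^{d+j-1}\chi(As+C/s)\Big|,
\]
which by \cref{kloostermanEstimate} is $O\big(q^{-d}\cdot q^{d/2}\cdot q^{(j-1)/2}\cdot q^{1/2}\big) = O(q^{(j-d)/2})$ — \emph{provided} $A\in\F_q^*$.

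The step I expect to be the real obstacle is verifying $A\neq 0$, and this is exactly where the full rank hypothesis on $\Delta_j$ is used. Tracking the coefficient of $s$ through the computation yields $A = \vec\gamma^{\,T}G^{-1}\vec\gamma - \abs{v_j}^2$ with $\vec\gamma = (v_i\cdot v_j)_{i=1}^{j-1}$; writing $p\in\Span(v_1,\ldots,v_{j-1})$ for the orthogonal projection of $v_j$ (which exists since $\Delta_{j-1}$ is full rank, hence $\Span(v_1,\ldots,v_{j-1})$ is non-degenerate), one recognizes $\vec\gamma^{\,T}G^{-1}\vec\gamma = \abs{p}^2$ and $A = \abs{p}^2 - \abs{v_j}^2 = -\abs{v_j-p}^2$. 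If $A$ vanished, then $v_j - p$ — which is nonzero because $v_1,\ldots,v_j$ are linearly independent — would be a self-orthogonal vector of $\Span(v_1,\ldots,v_j)$, contradicting $\Delta_j$ having rank $j$. With $A\in\F_q^*$ established, combining $\abs{M(\xi)} = \delta_{y_1,\ldots,y_{j-1}}(\xi)$ with $\abs{E(\xi)} = O(q^{(j-d)/2})$ and applying the triangle inequality in both directions gives $\abs{\widehat{\sigma}_{y_1,\ldots,y_{j-1}}(\xi)} = \delta_{y_1,\ldots,y_{j-1}}(\xi) + O(q^{(j-d)/2})$, as claimed.
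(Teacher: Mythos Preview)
Your proof is correct and follows essentially the same route as the paper: expand via orthogonality, separate the $s=0$ main term, apply \eqref{gaussSumIdentity} twice (once over $\F_q^d$, once over the $(j-1)$-dimensional parameter space), and reduce to \cref{kloostermanEstimate} after checking the coefficient of $s$ is nonzero via the full-rank hypothesis on $\Delta_j$. The only cosmetic difference is that the paper invokes Witt's extension theorem and an orthogonal basis for $\Span(y_1,\ldots,y_{j-1})$, whereas you work directly with the Gram matrix $G$ and its inverse; these are equivalent bookkeeping choices, and your version has the mild advantage of not needing Witt.
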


\begin{proof}
	We consider
	\[
		\widehat{\sigma}_{y_1, \ldots, y_{j - 1}}(\xi) = \E_{x \in \F_q^d} \sigma_{y_1, \ldots, y_{j - 1}}(x) \chi(-\xi \cdot x).
	\]
	As $\Delta_{j - 1}$ is full rank and $\{0, y_1, \ldots, y_{j - 1}\} \simeq \Delta_{j - 1}$, Witt's extension theorem (see, for example, \cite{lam}) implies the existence of an isometry $U : \F_q^d \rightarrow \F_q^d$ such that $U(v_i) = y_i$; here by isometry we mean that $U$ is an orthogonal linear transformation for which $U(v) \cdot U(w) = v \cdot w$ for all $v,w \in \mathbf{F}_q^d$.	We set $z := U(v_j)$ and individually expand each of the $j$ conditions that $\sigma_{y_1, \ldots, y_{j - 1}}$ checks via orthogonality to see $\widehat{\sigma}_{y_1, \ldots, y_{j - 1}}(\xi)$ is equal to
\[
	\sum_{s \in \F_q} \chi(-s |z|^2) \sum_{t_1, \ldots, t_{j - 1} \in \F_q} \prod_{i = 1}^{j - 1} \chi(-t_iy_i \cdot z) \E_{x \in \F_q^d} \chi(s|x|^2 - \xi \cdot x)\prod_{i = 1}^{j - 1}\chi(t_iy_i \cdot x),
\]
where here we use that $y_i \cdot z = v_i \cdot v_j$ and $|z|^2 = |v_j|^2$.  Rather than summing over the $t_i$ coefficients, we can instead sum over the span of the $y_i$ vectors for
\[
	\widehat{\sigma}_{y_1, \ldots, y_{j - 1}}(\xi) = \sum_{s \in \F_q} \chi(-s|z|^2) \sum_{y \in \Span(y_1, \ldots, y_{j - 1})} \chi(-y \cdot z) \E_{x \in \F_q^d} \chi(s|x|^2 + (y - \xi) \cdot x).
\]
When $s = 0$, orthogonality guarantees that the average in $x$ becomes 1 exactly when $\xi = y$ but zeroes out otherwise.  Then we have
\[
\widehat{\sigma}_{y_1, \ldots, y_{j - 1}}(\xi) = \chi(-\xi \cdot z) \delta_{y_1, \ldots, y_{j - 1}}(\xi) + \mathcal{E},
\]
where we define
\[
	\mathcal{E} := \sum_{s \in \F_q^*} \chi(-s|z|^2) \sum_{y \in \Span(y_1, \ldots, y_{j - 1})} \chi(-y \cdot z) \E_{x \in \F_q^d} \chi(s|x|^2 + (y - \xi) \cdot x).
\]
It remains to show that we have the claimed error bound of $\mathcal{E} = O(q^{(j - d)/2})$.  From the identity \eqref{gaussSumIdentity},
\[
	|\mathcal{E}| = q^{-d/2} \Big|\sum_{s \in \F_q^*} \eta(s)^d \chi(-s |z|^2) \sum_{y \in \Span(y_1, \ldots, y_{j - 1})} \chi(-|y - \xi|^2/4s - z \cdot y)\Big|.
\]
Expanding $|y - \xi|^2 = |\xi|^2 - 2\xi \cdot y + |y|^2$ and applying the change of variables $y \mapsto 2sy$, we have
\[
	|\mathcal{E}| = q^{-d/2}\Big|\sum_{s \in \F_q^*} \eta(s)^d \chi(-s |z|^2 - |\xi|^2/4s) \sum_{y \in \Span(y_1, \ldots, y_{j - 1})} \chi(-s|y|^2 - (2sz - \xi) \cdot y)\Big|.
\]
We would like to again invoke \eqref{gaussSumIdentity}, so we let $\{u_1, \ldots, u_{j - 1}\}$ be an orthogonal basis for $\Span(y_1, \ldots, y_{j - 1})$.  Expressing the above in terms of this basis, applying \eqref{gaussSumIdentity} again, and rearranging, we have that $|\mathcal{E}|$ is equal to
\begin{align*}
	 &q^{-d/2} \Big|\sum_{s \in \F_q^*} \eta(s)^d \chi\Big(-s |z|^2 - \frac{|\xi|^2}{4s}\Big) \prod_{i = 1}^{j - 1} \sum_{a_i \in \F_q} \chi(-s|u_i|^2 a_i^2 - (2sz \cdot u_i - \xi \cdot u_i) a_i)\Big|\\
	&= q^{(j - d - 1)/2} \Big|\sum_{s \in \F_q^*} \eta(s)^{d + j - 1} \chi(-s |z|^2 - \frac{|\xi|^2}{4s}) \prod_{i = 1}^{j - 1} \chi\Big(\frac{s(z \cdot u_i)^2}{|u_i|^2} + \frac{(\xi \cdot u_i)^2}{4s|u_i|^2}\Big)\Big|.
\end{align*}
Setting
\begin{align*}
	a :=& \sum_{i = 1}^{j - 1} \frac{(z \cdot u_i)^2}{|u_i|^2} - |z|^2\\
	b :=& \sum_{i = 1}^{j - 1} (\xi \cdot u_i)^2/4|u_i|^2 - |\xi|^2/4,
\end{align*}
we have shown
\[
	|\mathcal{E}| = q^{(j - d - 1)/2} \Big|\sum_{s \neq 0} \eta(s)^{d + j - 1} \chi(as + b/s)\Big|
\]
and our claimed error bound of $|\mathcal{E}| = q^{(j - d)/2}$ follows from \cref{kloostermanEstimate} provided $a \neq 0$.  If $a = 0$, then the vector
\[
	\sum_{i = 1}^{j - 1} \Big(\frac{z \cdot u_i}{|u_i|^2}\Big) u_i - z
\]
would be a self-orthogonal member of $\Span(u_1, \ldots, u_{j - 1},z)$.  Of course, this subspace is the orthogonal image of $\Span(v_1, \ldots, v_j)$, in which case our assumption that $\Delta_j$ has rank $j$ guarantees $a \neq 0$.
\end{proof}

For degenerate simplices, we cannot hope to obtain as much cancellation in our character sums.  The simplest case is to compute an asymptotic for the Fourier transform of the sphere of radius 0.

\begin{lem}\label{zeroSphereAsymptotic}
	If $|v_1|^2 = 0$, then
	\[
		\widehat{\sigma}(\xi) = \delta(\xi) + O(q^{1 - d/2}).
	\]
\end{lem}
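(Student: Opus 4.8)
The plan is to follow the same Fourier-analytic template used in the proof of \cref{sphereAsymptotic} (and mirrored in \cref{transformAsymptotic}), isolating the main term via the $s = 0$ contribution and bounding the remaining character sum, but now accounting for the fact that the radius-zero sphere behaves differently. First I would write
\[
	\widehat{\sigma}(\xi) = \E_{x \in \F_q^d} \sigma(x)\chi(-\xi \cdot x) = \sum_{s \in \F_q} \E_{x \in \F_q^d} \chi(s|x|^2 - \xi \cdot x),
\]
using orthogonality to rewrite the condition $|x|^2 = |v_1|^2 = 0$ as an average over $s \in \F_q$ of $\chi(s|x|^2)$ and absorbing the weight $q$ into the count. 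The $s = 0$ term contributes exactly $\delta(\xi)$ by orthogonality, so it remains to bound
\[
	\mathcal{E} := \sum_{s \in \F_q^*} \E_{x \in \F_q^d} \chi(s|x|^2 - \xi \cdot x).
\]

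Next I would apply the Gauss sum identity \eqref{gaussSumIdentity} to the inner average, obtaining
\[
	\mathcal{E} = q^{-d/2} \sum_{s \in \F_q^*} \eta(s)^d \chi(-|\xi|^2/4s) \cdot c_s
\]
for unimodular constants $c_s$ coming from $G_q^d q^{-d}$ (here $|G_q|^d q^{-d} = q^{-d/2}$). The key point is that the resulting sum over $s$ is of the form $\sum_{s \neq 0} \eta(s)^d \chi(b/s)$ with $b = -|\xi|^2/4$, which is a Kloosterman-type sum in the variable $1/s$ with \emph{no linear term} (the coefficient of $s$ is zero, precisely because the radius is $0$). When $\xi \neq 0$ and $b \neq 0$, \cref{kloostermanEstimate} gives $O(q^{1/2})$, yielding $|\mathcal{E}| = O(q^{(1-d)/2})$, which is even better than claimed. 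The genuinely lossy case is $\xi = 0$ (or more generally $|\xi|^2 = 0$), where $b = 0$ and the sum over $s \in \F_q^*$ of $\eta(s)^d$ is either $q - 1$ (if $d$ is even) or $0$ (if $d$ is odd). The even case produces the dominant error: $|\mathcal{E}| = O(q^{-d/2} \cdot q) = O(q^{1 - d/2})$, matching the stated bound.

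The main obstacle — really the only subtlety — is handling the degenerate case $|\xi|^2 = 0$ with $\xi \neq 0$, which can occur since we are working with an isotropic form. There $b = 0$ again, but $\xi$ itself is nonzero so $\delta(\xi) = 0$; one checks directly that $\mathcal{E} = q^{-d/2}\sum_{s \neq 0}\eta(s)^d c_s'$ for unimodular $c_s'$ independent of $s$, giving $|\mathcal{E}| \le q^{-d/2}(q-1) = O(q^{1-d/2})$ when $d$ is even and $\mathcal{E} = 0$ when $d$ is odd — in both cases consistent with the claimed $O(q^{1-d/2})$ error. Assembling the three cases ($\xi = 0$; $\xi \neq 0$ with $|\xi|^2 \neq 0$; $\xi \neq 0$ with $|\xi|^2 = 0$) completes the proof, with the worst error term $O(q^{1-d/2})$ arising from the anisotropic directions $\xi$.
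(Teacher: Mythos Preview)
Your approach is essentially identical to the paper's: expand the condition $|x|^2=0$ via orthogonality, separate the $s=0$ term to produce $\delta(\xi)$, and apply the Gauss sum identity \eqref{gaussSumIdentity} to the remaining $s\in\F_q^*$ sum. The only difference is that you carry out a case analysis on $|\xi|^2$ and the parity of $d$ to squeeze out extra savings in some cases, whereas the paper simply bounds the sum over $s$ trivially by $q-1$ to obtain $O(q^{1-d/2})$ uniformly, noting that the sharper bounds do not improve the downstream applications.
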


\begin{proof}
	By orthogonality, we are considering
	\[
		\widehat{\sigma}(\xi) = \sum_{s \in \F_q} \E_x \chi(s|x|^2 - \xi \cdot x).
	\]
	Separating the $s = 0$ term and applying \eqref{gaussSumIdentity},
	\[
		\widehat{\sigma}(\xi) = \delta(\xi) + q^{-d} G_q^{d/2} \sum_{s \in \F_q^*} \eta(t)^d \omega^{-|\xi|^2/4s}
	\]
	While we may be able to hope for some cancellation in the case $d$ is odd or $|\xi|^2 \neq 0$, these specific savings do not improve any of our applications, so we content ourselves to bounding the error trivially by $O(q^{1 - d/2})$.
\end{proof}

In light of the lack of cancellation in the self-orthogonal case, we avoid the identity \eqref{gaussSumIdentity} and the Kloosterman sum machinery in favor of Weyl differencing.  This simpler method still provides what we believe to be the right order of magnitude for our error terms in the low rank case.

\begin{lem}\label{degenerateAsymptotic}
	If $2 \leq j \leq k$, $r_j < j$, and the ordered $(j - 1)$-simplex $\{0, y_1, \ldots, y_{j - 1}\}$ is isometric to $\Delta_{j - 1}$, then
	\[
		\abs{\widehat{\sigma}_{y_1, \ldots, y_{j - 1}}(\xi)} = \delta_{y_1, \ldots, y_{j - 1}}(\xi) + O(q^{j - (d + r_j)/2}).
	\]
\end{lem}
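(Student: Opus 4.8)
The plan is to start exactly as in the proof of \cref{transformAsymptotic}. Since $\Delta_{j-1}$ has rank $r_{j-1}$, which could be less than $j-1$, we cannot immediately invoke Witt's theorem on the nondegenerate space $\Span(v_1,\ldots,v_{j-1})$; but the ordered isometry $\{0,y_1,\ldots,y_{j-1}\}\simeq\Delta_{j-1}$ still lets us match Gram matrices, and in any case we only need the expansion by orthogonality: writing each of the $j$ defining conditions of $\sigma_{y_1,\ldots,y_{j-1}}$ as a character sum, we obtain
\[
	\widehat{\sigma}_{y_1,\ldots,y_{j-1}}(\xi)
	= \sum_{s\in\F_q}\chi(-s|v_j|^2)\sum_{y\in\Span(y_1,\ldots,y_{j-1})}\chi\big(-\,\phi(y)\big)\,\E_x\chi(s|x|^2+(y-\xi)\cdot x),
\]
where $\phi$ is the linear functional on $\Span(y_1,\ldots,y_{j-1})$ determined by $y_i\mapsto v_i\cdot v_j$. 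The $s=0$ term contributes $\chi(\text{phase})\,\delta_{y_1,\ldots,y_{j-1}}(\xi)$ just as before, and it remains to bound the error term $\mathcal{E}$, the sum over $s\in\F_q^*$, by $O(q^{j-(d+r_j)/2})$.

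Now I depart from the Kloosterman route. Applying \eqref{gaussSumIdentity} to the $x$-average introduces $\eta(s)^d\chi(-|y-\xi|^2/4s)$ and a factor $q^{-d/2}$, so
\[
	|\mathcal{E}| = q^{-d/2}\Big|\sum_{s\in\F_q^*}\eta(s)^d\chi(-s|v_j|^2)\sum_{y\in\Span(y_1,\ldots,y_{j-1})}\chi\big(-|y-\xi|^2/4s-\phi(y)\big)\Big|.
\]
Rather than diagonalizing the inner quadratic form and summing in $s$ — which is precisely where one needs the rank hypothesis to guarantee a nonzero Kloosterman coefficient — I will Weyl-difference the inner sum over $y$. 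That is, I bound $|\mathcal{E}|$ by $q^{-d/2}$ times $\sum_{s\neq0}$ of the modulus of the inner sum, and for each fixed $s$ apply Cauchy–Schwarz (or simply square and expand) to $\sum_y\chi(-|y|^2/4s+\text{linear})$. The cross terms collapse the quadratic part: with $h$ ranging over $\Span(y_1,\ldots,y_{j-1})$, the difference $|y+h|^2-|y|^2=2h\cdot y+|h|^2$ is linear in $y$, so the inner $y$-sum over the difference is a full character sum that vanishes unless $h$ lies in the orthogonal complement (within that span) of the span itself — i.e. unless $h\in V_{j-1}\cap V_{j-1}^\perp$ where $V_{j-1}=\Span(y_1,\ldots,y_{j-1})$, which has dimension $j-1-r_{j-1}$. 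Since $r_{j-1}\geq r_j-1$ (in fact $r_{j-1}=r_j$ when $r_j<j$, because then $j-1\geq r$ too), this self-orthogonal radical has dimension at most $j-1-r_{j}+1 \le$ the relevant bound; tracking the exponents, the squared inner sum is $O(q^{\,2(j-1)-(j-1-r_{j-1})}\cdot q^{(j-1-r_{j-1})})$ up to the linear-phase constraint, giving the inner sum itself size $O(q^{(j-1+(\text{radical dim}))/2})$. Multiplying by the trivial bound $q$ for the $s$-sum and by $q^{-d/2}$, and being slightly careful about the case the linear phase forces extra vanishing versus not, yields $|\mathcal{E}| = O(q^{1-d/2+(j-1)/2+\ldots})$; I will check that the arithmetic with $r_j$ lands exactly on $q^{j-(d+r_j)/2}$, using that the improvement over the trivial bound is governed by the nondegeneracy rank $r_j$ of $\Delta_j$ rather than the full dimension.

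The main obstacle is the bookkeeping in the Weyl-differencing step: after squaring the inner $y$-sum one must correctly identify which shifts $h$ survive, separate the diagonal contribution (where $h$ is in the radical) from the off-diagonal, and then handle the remaining sum over $s$ — here one may still get a modest power saving from the $s$-sum when $d$ is odd or a phase is nonzero, but as in \cref{zeroSphereAsymptotic} I expect to bound it trivially by $q$, which is what produces the ``$j$'' rather than ``$j-1$'' and the ``$1-d/2$''-type loss characteristic of the degenerate regime. One subtlety worth flagging: when $\xi\notin\Span(y_1,\ldots,y_{j-1})$ the main term is absent and the whole transform equals $\mathcal{E}$, so the same bound must cover that case — it does, since the argument above never used $\xi\in\Span(y_1,\ldots,y_{j-1})$. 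I also need to confirm $r_{j-1}=r_j$ in this regime: since $r_j<j$ forces $r_j=r$ and $j-1\geq r$, we indeed have $r_{j-1}=r=r_j$, so the radical of $V_{j-1}$ has dimension $j-1-r_j$, which is what makes the final exponent come out to $j-(d+r_j)/2$.
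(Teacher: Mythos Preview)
Your strategy is sound and reaches the stated bound, but it takes a different route from the paper. The paper deliberately avoids \eqref{gaussSumIdentity} in this case: it parametrizes the support of $\sigma_{y_1,\ldots,y_{j-1}}$ directly as an affine slice $z+V^\perp$ with $V=\Span(y_1,\ldots,y_{j-1})$, expands only the single condition $|x|^2=|v_j|^2$ via orthogonality, and then Weyl-differences the resulting sum over $x\in V^\perp$; the surviving shifts are those $h\in V^\perp$ orthogonal to all of $V^\perp$, i.e.\ $h\in V\cap V^\perp$, giving $|\mathcal{E}|^2\leq q^{j-d+1}|V\cap V^\perp|=q^{2j-d-r}$. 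You instead follow the opening of \cref{transformAsymptotic}, apply \eqref{gaussSumIdentity} to the full $\F_q^d$-average, and Weyl-difference the remaining sum over $y\in V$; the surviving shifts are again $h\in V\cap V^\perp$, and after the trivial bound on the $s$-sum you land on the same exponent (your observation that $r_{j-1}=r_j$ here is exactly right). The two arguments are dual---one differences over $V^\perp$, the other over $V$, with the same radical governing both. The paper's version is slightly cleaner in that it never touches \eqref{gaussSumIdentity}; yours has the virtue of running in parallel with \cref{transformAsymptotic} right up to the point where the Kloosterman step would fail.

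One bookkeeping correction: your displayed expression $O(q^{2(j-1)-(j-1-r_{j-1})}\cdot q^{(j-1-r_{j-1})})$ for the squared inner sum collapses to the trivial $q^{2(j-1)}$. What the differencing actually gives is $|T_s|^2\leq q^{j-1}\cdot|V\cap V^\perp|=q^{2(j-1)-r_{j-1}}$, hence $|T_s|\leq q^{j-1-r_{j-1}/2}$, and then $|\mathcal{E}|\leq q^{-d/2}\cdot q\cdot q^{j-1-r_j/2}=q^{j-(d+r_j)/2}$ as you claim at the end.
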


\begin{proof}
	As before, we consider
	\[
		\widehat{\sigma}_{y_1, \ldots, y_{j - 1}}(\xi) = \E_x \sigma_{y_1, \ldots, y_{j - 1}}(x) \chi(-\xi \cdot x).
	\]
	Let $z \in \F_q^d$ such that $y_i \cdot z = v_i \cdot v_k$ for $1 \leq i \leq j - 1$.  Setting $V = \Span(y_1, \ldots, y_{j - 1})$, the condition $\sigma_{y_1, \ldots, y_{j - 1}}(x) \neq 0$ restricts our attention to $x \in z + V^\perp$ with $|x|^2 = |v_j|^2$.  Reindexing our sum accordingly and expanding the condition $|x|^2 = |v_j|^2$ via orthogonality, we have
	\[
		\widehat{\sigma}_{y_1, \ldots, y_{j - 1}}(\xi) = q^{j - d - 1} \chi(-\xi \cdot z)\sum_{s \in \F_q} \chi(-s|v_j|^2) \sum_{x \in V^\perp} \chi(s|x + z|^2 - \xi \cdot x).
	\]
	When $s = 0$, we are left with the inner sum of
	\[
		\sum_{x \in V^\perp} \chi(-\xi \cdot x) = \begin{cases} q^{d - j + 1} & \text{if } \xi \in V \\ 0 & \text{otherwise} \end{cases}
	\]
	From this, we have
	\[
		\widehat{\sigma}_{y_1, \ldots, y_{j - 1}}(\xi) = \chi(-\xi \cdot z) \delta_{y_1, \ldots, y_{j - 1}}(\xi) + \mathcal{E},
	\]
	where we define
	\begin{align*}
		\mathcal{E} &:= q^{j - d - 1} \chi(-\xi \cdot z)\sum_{s \in \F_q^*} \chi(-s|v_j|^2) \sum_{x \in V^\perp} \chi(s|x + z|^2 - \xi \cdot x),\\
		&= q^{j - d - 1} \chi(-\xi \cdot z) \sum_{s \in \F_q^*} \chi(s|z|^2 - s|v_j|^2) \sum_{x \in V^\perp} \chi(s|x|^2 + (2sz - \xi) \cdot x).
	\end{align*}
	From the triangle inequality, we certainly have
	\[
		|\mathcal{E}| \leq q^{j - d} \max_{s \in \F_q^*} \Big|\sum_{x \in V^\perp} \chi(s|x|^2 + (2sz - \xi) \cdot x)\Big|.
	\]
	By expanding and rearranging,
	\begin{align*}
		\Big|\sum_{x \in V^\perp} \chi(s|x|^2 + (2sz - \xi) \cdot x)\Big|^2 &= \sum_{x,h \in V^\perp} \chi(s|x + h|^2 - s|x|^2 + (2sz - \xi) \cdot h)\\
		&= \sum_{h \in V^\perp} \chi(s|h|^2 + (2sz - \xi) \cdot h) \sum_{x \in V^\perp} \chi(2sh \cdot x)\\
		&\leq \sum_{h \in V^\perp} \Big|\sum_{x \in V^\perp} \chi(2sh \cdot x)\Big|.
	\end{align*}
	Since we are working with $s \neq 0$, we are left with an inner sum of
	\[
		\sum_{x \in V^\perp} \chi(2sh \cdot x) = \begin{cases} q^{d - j + 1} & \text{if } h \in V \\ 0 & \text{otherwise} \end{cases}
	\]
	establishing $|\mathcal{E}|^2 \leq q^{j - d + 1} |V \cap V^\perp|$.  As the rank of $\Delta_{j - 1}$ is $r$, we have 
	\[
		\dim(V \cap V^\perp) = j - 1 - r
	\]
	implying $|\mathcal{E}|^2 \leq q^{2j - d - r}$, from which our claimed bound follows.
\end{proof}

To summarize, we can combine \cref{sphereAsymptotic} and \cref{zeroSphereAsymptotic} for
\begin{equation}\label{mainSphereAsymptotic}
	\widehat{\sigma}(\xi) = \delta(\xi) + O(q^{1 - (d + r_1)/2}),
\end{equation}
and by combining \cref{transformAsymptotic} and \cref{degenerateAsymptotic} we have, for every $2 \leq j \leq k$,
\begin{equation}\label{mainAsymptotic}
	\abs{\widehat{\sigma}_{y_1, \ldots, y_{j - 1}}(\xi)} = \delta_{y_1, \ldots, y_{j - 1}}(\xi) + O(q^{j - (d + r_j)/2}).
\end{equation}

\section{Counting Simplices}

In order to count $k$-simplices in $\F_q^d$ which are isometric to $\Delta_k$, we introduce notation for $1 \leq j \leq k$ and $f_0, \ldots, f_j : \F_q^d \rightarrow [-1,1]$, 
\begin{align*}
	S_j(y_1, \ldots, y_j) &:= \sigma(y_1)\sigma_{y_1}(y_2) \cdots \sigma_{y_1, \ldots, y_{j - 1}}(y_j)\\
	\mathcal{S}_j(f_0, \ldots, f_j) &:= \E_{y_1, \ldots, y_j}^* S_j(y_1, \ldots, y_j) \E_x f_0(x) \prod_{i = 1}^j f_i(x + y_i)
\end{align*}
where here we use the shorthand
\[
	\E_{y_1, \ldots, y_j}^*F(y_1, \ldots, y_j) := q^{-jd} \sum_{\substack{y_1, \ldots, y_j \in \F_q^d \\ \text{linearly independent}}} F(y_1, \ldots, y_j).
\]
We begin by showing that, while restricting the dot products of $y_1, \ldots, y_j$ is not enough to guarantee linear independence, the contribution of linearly dependent vectors is negligible in the following sense.

\begin{lem}\label{dependentError}
	If $2 \leq j \leq k$ and the ordered $(j - 1)$-simplex $\{0, y_1, \ldots, y_{j - 1}\}$ is isometric to $\Delta_{j - 1}$, then
	\[
		\sum_{y_j \in \Span(y_1, \ldots, y_{j - 1})} \sigma_{y_1, \ldots, y_{j - 1}}(y_j) \leq q^{2j - 1 - r_{j - 1}}.
	\]
\end{lem}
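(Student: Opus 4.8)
The plan is to bound the sum by $q^j$ times the number of solutions of a linear system, and then control that number through the rank of the Gram matrix of $\Delta_{j-1}$. First I would note that $\sigma_{y_1,\ldots,y_{j-1}}(y_j) \le q^j$ for every $y_j \in \F_q^d$, with equality exactly when $y_i \cdot y_j = v_i \cdot v_j$ for all $1 \le i \le j$. Since we only want an upper bound, I can discard the condition indexed by $i = j$; the left-hand side is then at most $q^j N$, where
\[
	N := \#\big\{ y_j \in \Span(y_1,\ldots,y_{j-1}) : y_i \cdot y_j = v_i \cdot v_j \text{ for } 1 \le i \le j - 1 \big\}.
\]

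To estimate $N$, write $V := \Span(y_1,\ldots,y_{j-1})$ and parametrize an arbitrary $y_j \in V$ as $y_j = \sum_{i=1}^{j-1} t_i y_i$ with $t = (t_1,\ldots,t_{j-1}) \in \F_q^{j-1}$; the coordinates need not be unique, but that is harmless for an upper bound. If $M = (y_i \cdot y_{i'})_{1 \le i, i' \le j-1}$ denotes the Gram matrix and $b = (v_i \cdot v_j)_{1 \le i \le j-1}$, then the defining conditions for $N$ become the affine linear system $Mt = b$, so $N$ is at most the number of solutions $t$ of this system. Because $\{0, y_1, \ldots, y_{j-1}\} \simeq \Delta_{j-1}$, the matrix $M$ coincides with the Gram matrix of $v_1,\ldots,v_{j-1}$; as those vectors are linearly independent, $M$ is the matrix of the dot product restricted to $\Span(v_1,\ldots,v_{j-1})$, whose radical is $\Span(v_1,\ldots,v_{j-1}) \cap \Span(v_1,\ldots,v_{j-1})^\perp$. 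Hence $\rank(M) = r_{j-1}$, the rank of $\Delta_{j-1}$, and the system $Mt = b$ has either no solution or exactly $q^{(j-1) - r_{j-1}}$ of them. In either case $N \le q^{(j-1)-r_{j-1}}$, and the sum is at most $q^j \cdot q^{(j-1)-r_{j-1}} = q^{2j - 1 - r_{j-1}}$.

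I do not anticipate a genuine obstacle; the one point that needs a sentence of justification is the identity $\rank(M) = r_{j-1}$, i.e.\ that the rank of the Gram matrix of an ordered simplex equals the rank of the simplex as defined via the radical of its span. It is also worth emphasizing that the vectors $y_1,\ldots,y_{j-1}$ may fail to be linearly independent when $r_{j-1} < j - 1$, which is precisely why it is cleanest to bound $N$ by the number of coordinate vectors $t$ solving $Mt = b$ rather than by $\abs{V \cap V^\perp}$ directly. Finally, the quadratic condition $\abs{y_j}^2 = \abs{v_j}^2$ that we threw away is not needed for the stated bound (keeping it would only save a constant factor in the degenerate range), although one could reinstate it by observing that $y_j \mapsto \abs{y_j}^2$ is constant on each coset of $V \cap V^\perp$ inside $V$.
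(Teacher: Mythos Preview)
Your proof is correct and takes the same approach as the paper's: both drop the quadratic condition and bound the number of $y_j \in V$ with the prescribed dot products by the size of a coset of $V \cap V^\perp$ (equivalently, the kernel of the Gram matrix $M$), yielding at most $q^{(j-1)-r_{j-1}}$ choices and hence the bound $q^{2j-1-r_{j-1}}$. One small correction: since by hypothesis $\{0, y_1, \ldots, y_{j-1}\}$ is a $(j-1)$-simplex, the vectors $y_1, \ldots, y_{j-1}$ are linearly independent by the paper's definition, so your caution about non-unique coordinates is unnecessary (though harmless).
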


\begin{proof}
	Set $V = \Span(y_1, \ldots, y_{j - 1})$.  By fixing the dot products $y_1 \cdot y_j, \ldots, y_{j - 1} \cdot y_j$, we are restricting our attention to $y_j$ within a coset of $V \cap V^\perp$.  Hence, there are at most $q^{j - 1 - r_{j - 1}}$ choices for $y_j \in V$ for which $\sigma_{y_1, \ldots, y_{j - 1}}(y_j) = q^j$.
\end{proof}

Letting $\mathbf{1} : \F_q^d \rightarrow \{1\}$, this allows us to prove the following count.  

\begin{lem}\label{simplicesAsymptotic} For each $1 \leq j \leq k$,
\[
	\mathcal{S}_j(\mathbf{1}, \ldots, \mathbf{1}) = 1 + O(q^{j - (d + r_j)/2}).
\]
\end{lem}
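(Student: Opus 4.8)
The plan is to induct on $j$. The base case $j=1$ amounts to counting vectors $y_1 \in \F_q^d$ with $|y_1|^2 = |v_1|^2$; since $\mathcal{S}_1(\mathbf{1}) = \E_{y_1}^* \sigma(y_1) = q^{-d}\sum_{y_1 \neq 0}\sigma(y_1)$, and the value at $y_1 = 0$ is either $0$ (if $|v_1|^2 \neq 0$) or $q$ (if $|v_1|^2 = 0$), this quantity differs from $q^{1-d}\sum_{y_1}\sigma(y_1) = q \cdot \widehat{\sigma}(0) = q(\delta(0) + O(q^{(1-d)/2}\text{ or }q^{1-d/2}))$ by at most $q^{1-d}$; invoking \eqref{mainSphereAsymptotic} gives $\mathcal{S}_1(\mathbf 1) = 1 + O(q^{1-(d+r_1)/2})$, with the $q^{1-d}$ discrepancy absorbed since $r_1 \leq 1 \leq k < d$.

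For the inductive step, I would write $\mathcal{S}_j(\mathbf 1,\ldots,\mathbf 1)$ by peeling off the last variable: for each linearly independent tuple $(y_1,\ldots,y_{j-1})$ with $\{0,y_1,\ldots,y_{j-1}\} \simeq \Delta_{j-1}$ — these are exactly the tuples with $S_{j-1}(y_1,\ldots,y_{j-1}) = q^{\binom{j}{2}} \neq 0$ — the inner sum over $y_j$ is $q^{-d}\sum_{y_j}\sigma_{y_1,\ldots,y_{j-1}}(y_j)$ restricted to $y_j$ making the full tuple linearly independent. Since $(y_1,\ldots,y_{j-1})$ is already independent, the only bad $y_j$ lie in $\Span(y_1,\ldots,y_{j-1})$, and by \cref{dependentError} their total $\sigma$-contribution is at most $q^{2j-1-r_{j-1}}$. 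On the other hand $q^{-d}\sum_{y_j \in \F_q^d}\sigma_{y_1,\ldots,y_{j-1}}(y_j) = q \cdot \widehat{\sigma}_{y_1,\ldots,y_{j-1}}(0)$, and $\widehat{\sigma}_{y_1,\ldots,y_{j-1}}(0) = \delta_{y_1,\ldots,y_{j-1}}(0) + O(q^{j-(d+r_j)/2}) = 1 + O(q^{j-(d+r_j)/2})$ by \eqref{mainAsymptotic} (noting $0 \in \Span(y_1,\ldots,y_{j-1})$ always). Here I should double-check the normalization: $\sigma_{y_1,\ldots,y_{j-1}}$ takes value $q^j$, and $S_{j-1}$ contributes $q^{\binom{j}{2}}$, so the product $S_{j-1}\cdot\sigma$ has the right total weight $q^{\binom{j+1}{2}}$ to match $\mathcal{S}_j$ being normalized to $1$; tracking the $q^{-jd}$ and $q^{-(j-1)d}$ prefactors carefully, the inner-$y_j$ average sits at exactly $q\,\widehat{\sigma}_{y_1,\ldots,y_{j-1}}(0)$ after accounting for the $q^j$ weight.

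Assembling these pieces: $\mathcal{S}_j(\mathbf 1,\ldots,\mathbf 1)$ equals $\mathcal{S}_{j-1}(\mathbf 1,\ldots,\mathbf 1)$ times a factor that is $(1 + O(q^{j-(d+r_j)/2}))$ coming from $\widehat{\sigma}_{y_1,\ldots,y_{j-1}}(0)$, minus a correction for linearly dependent $y_j$ of relative size at most $q^{-jd}\cdot q^{\binom j2}\cdot(\#\text{tuples})\cdot q^{2j-1-r_{j-1}}$; since the number of isometric $(j-1)$-tuples is $O(q^{(j-1)d - \binom j2})$ by the inductive hypothesis (multiplied back by the prefactors), this dependent contribution is $O(q^{2j-1-r_{j-1}-d})$, which is dominated by $q^{j-(d+r_j)/2}$ for $d > 2k - r \geq 2j - r_j$ and the relation $r_{j-1} \in \{r_j - 1, r_j\}$. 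Using the inductive bound $\mathcal{S}_{j-1}(\mathbf 1,\ldots,\mathbf 1) = 1 + O(q^{(j-1)-(d+r_{j-1})/2})$ and checking that both error exponents $j-1-(d+r_{j-1})/2$ and $j-(d+r_j)/2$ satisfy the needed comparison (the latter dominates, again via $d > 2k-r$ and $r_j \geq r_{j-1}$), multiplying out gives $\mathcal{S}_j(\mathbf 1,\ldots,\mathbf 1) = 1 + O(q^{j-(d+r_j)/2})$.

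The main obstacle is the bookkeeping of exponents: one must verify that every error term generated — the propagated error $q^{(j-1)-(d+r_{j-1})/2}$ from the previous stage, the Fourier error $q^{j-(d+r_j)/2}$ from \eqref{mainAsymptotic}, the cross term between them, and the linear-dependence error $q^{2j-1-r_{j-1}-d}$ — is $O(q^{j-(d+r_j)/2})$. This uses $d > 2k - r$ together with $r_j \le k$, $r_j \ge r_{j-1} \ge r_j - 1$, and $j \le k$; none of these comparisons is deep, but getting the worst case right (which is governed by the largest of $r_j$ and how close $d$ is to $2k-r$) requires care. Everything else is a routine unwinding of the definitions of $\mathcal{S}_j$ and $S_j$ and an application of Fourier inversion at $\xi = 0$, i.e. the observation that $q\,\widehat{\sigma}_{y_1,\ldots,y_{j-1}}(0)$ is precisely the $L^1$-type mass we need.
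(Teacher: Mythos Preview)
Your proposal is correct and follows essentially the same approach as the paper: induct on $j$, peel off $y_j$, use \cref{dependentError} to discard linearly dependent $y_j$, evaluate the remaining full average as $\widehat{\sigma}_{y_1,\ldots,y_{j-1}}(0)$ via \eqref{mainAsymptotic}, and combine errors using $r_{j-1}\ge r_j-1$. The only slip is a stray factor of $q$ in your normalization (you write $q\cdot\widehat{\sigma}(0)$ and $q\cdot\widehat{\sigma}_{y_1,\ldots,y_{j-1}}(0)$ where it should simply be $\widehat{\sigma}(0)$ and $\widehat{\sigma}_{y_1,\ldots,y_{j-1}}(0)$, since $\E_x f(x)=\widehat f(0)$ already); once that is removed the bookkeeping goes through exactly as you describe.
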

\begin{proof}
	We proceed by induction on $j$.  When $j = 1$, this follows immediately from \eqref{mainSphereAsymptotic}.  When $j = 2$, we apply \cref{dependentError} and \eqref{mainSphereAsymptotic} for
	\begin{align*}
		\mathcal{S}_2(\mathbf{1}, \mathbf{1}, \mathbf{1}) &= \E_{y_1, y_2}^* \sigma(y_1) \sigma_{y_1}(y_2)\\
		&= \E_{y_1} \sigma(y_1) \E_{y_2} \sigma_{y_1}(y_2) + O(q^{3 - d - r_1})
	\end{align*}
	in which case the conclusion follows from \eqref{mainSphereAsymptotic}, \eqref{mainAsymptotic}, and the observation that $r_1 \geq r_2 - 1$.  For $j \geq 3$, suppose the lemma has been established for $j - 1 \geq 2$.  Applying \cref{dependentError} and our induction hypothesis,
	\begin{align*}
		\mathcal{S}_j(\mathbf{1}, \ldots, \mathbf{1}) &= \E_{y_1, \ldots, y_{j - 1}}^* S_{j - 1}(y_1, \ldots, y_{j - 1}) \E_{y_j} \sigma_{y_1, \ldots, y_{j - 1}}(y_j) + O(q^{2j - 1 - r_{j - 1}})
	\end{align*}
	in which case we finish by applying \eqref{mainAsymptotic} and our induction hypothesis once more, where here we use that the error $O(q^{j - (d + r_j)/2})$ increases with $j$.
\end{proof}

Since we are working with $d > 2k  - r$, \cref{simplicesAsymptotic} demonstrates that $\mathbf{F}_q^d$ contains roughly $q^{kd - \binom{k + 1}{2}}$ isometric copies of $\Delta_k$.  In order to count isometric copies of $\Delta_k$ within a set, we will need the following observation.

\begin{lem}\label{errorlemma}
	If $2 \leq j \leq k$, the ordered $(j - 1)$-simplex $\{0, y_1, \ldots, y_{j - 1}\}$ is isometric to $\Delta_{j - 1}$, and $\xi \in \mathbf{F}_q^d \setminus \{0\}$, then
	\[
		\E_{y_1, \ldots, y_{j - 1}}^* S_{j - 1}(y_1, \ldots, y_{j - 1}) |\widehat{\sigma}_{y_1, \ldots, y_{j - 1}}(\xi)|^2 = O(q^{2j - d - r_{j}}).
	\]
\end{lem}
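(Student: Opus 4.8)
The plan is to split $|\widehat{\sigma}_{y_1,\ldots,y_{j-1}}(\xi)|^2$ using \eqref{mainAsymptotic} and reduce to a purely combinatorial count. Since $\delta_{y_1,\ldots,y_{j-1}}(\xi)\in\{0,1\}$, the estimate \eqref{mainAsymptotic} gives $|\widehat{\sigma}_{y_1,\ldots,y_{j-1}}(\xi)|\le\delta_{y_1,\ldots,y_{j-1}}(\xi)+O(q^{j-(d+r_j)/2})$, hence after squaring
\[
	|\widehat{\sigma}_{y_1,\ldots,y_{j-1}}(\xi)|^2\le 2\,\delta_{y_1,\ldots,y_{j-1}}(\xi)+O(q^{2j-d-r_j}).
\]
Averaging, the error term contributes $O(q^{2j-d-r_j})\,\E_{y_1,\ldots,y_{j-1}}^*S_{j-1}(y_1,\ldots,y_{j-1})=O(q^{2j-d-r_j})\,\mathcal{S}_{j-1}(\mathbf{1},\ldots,\mathbf{1})=O(q^{2j-d-r_j})$ by \cref{simplicesAsymptotic}. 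So it suffices to prove
\[
	\E_{y_1,\ldots,y_{j-1}}^*S_{j-1}(y_1,\ldots,y_{j-1})\,\delta_{y_1,\ldots,y_{j-1}}(\xi)=O(q^{2j-d-r_j}),
\]
which I would isolate as a separate lemma and prove by induction on $j$.

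For the induction step I peel off $y_{j-1}$ via $\delta_{y_1,\ldots,y_{j-1}}(\xi)=\delta_{y_1,\ldots,y_{j-2}}(\xi)+\bigl(1-\delta_{y_1,\ldots,y_{j-2}}(\xi)\bigr)\delta_{y_1,\ldots,y_{j-1}}(\xi)$. On the first event the condition on $y_{j-1}$ is vacuous: as $y_1,\ldots,y_{j-2}$ are linearly independent the map $x\mapsto(y_i\cdot x)_{i\le j-2}$ is onto $\F_q^{j-2}$, so $\sum_{y_{j-1}\in\F_q^d}\sigma_{y_1,\ldots,y_{j-2}}(y_{j-1})=q^{j-1}\cdot q^{d-(j-2)}=q^{d+1}$, and the bookkeeping collapses this piece to $q\cdot\E_{y_1,\ldots,y_{j-2}}^*S_{j-2}(y_1,\ldots,y_{j-2})\,\delta_{y_1,\ldots,y_{j-2}}(\xi)$, which the inductive hypothesis bounds by $O(q^{2j-1-d-r_{j-1}})=O(q^{2j-d-r_j})$ since $r_j\le r_{j-1}+1$. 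On the second event, $\xi\notin\Span(y_1,\ldots,y_{j-2})$ forces $y_{j-1}\in\Span(\xi,y_1,\ldots,y_{j-2})$; intersecting this $(j-1)$-dimensional space with the coset of $\Span(y_1,\ldots,y_{j-2})^\perp$ cut out by $\sigma_{y_1,\ldots,y_{j-2}}(y_{j-1})\ne 0$ leaves, when nonempty, a coset of a subspace of dimension at most $j-1-r_{j-2}$ — a short dimension count using that the Gram matrix of $y_1,\ldots,y_{j-2}$ has rank $r_{j-2}$ — so the inner $y_{j-1}$-sum is $O(q^{2j-2-r_{j-2}})$; summing against $S_{j-2}$ and invoking \cref{simplicesAsymptotic} bounds this piece by $O(q^{2j-2-d-r_{j-2}})=O(q^{2j-d-r_j})$, now using $r_j\le r_{j-2}+2$. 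Both of these inequalities are immediate from $r_m=\min(m,r)$. The base case $j=2$ is a direct computation: $\E_{y_1}^*\sigma(y_1)\,\delta_{y_1}(\xi)=q^{1-d}\#\{c\in\F_q^*:c^2|\xi|^2=|v_1|^2\}$, which is $O(q^{1-d})$ unless $|\xi|^2=|v_1|^2=0$, in which case it is $O(q^{2-d})$ and $r_2=0$; either way this is $O(q^{4-d-r_2})$.

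The step I expect to be the main obstacle is the second event in the inductive step: one must check that the constraint ``$\xi\in\Span(y_1,\ldots,y_{j-1})$ but $\xi\notin\Span(y_1,\ldots,y_{j-2})$'' genuinely saves the right power of $q$ even when $\Span(y_1,\ldots,y_{j-2})$ is badly self-orthogonal. This hinges on the estimate $\dim\bigl(\Span(y_1,\ldots,y_{j-2})^\perp\cap\Span(\xi,y_1,\ldots,y_{j-2})\bigr)\le j-1-r_{j-2}$, which follows from $\dim(U\cap W)=\dim U+\dim W-\dim(U+W)$ together with the fact that the radical of $\Span(y_1,\ldots,y_{j-2})$ has dimension $(j-2)-r_{j-2}$; this is precisely what converts the loss coming from that radical into the inequality $r_j\le r_{j-2}+2$ needed to close the induction. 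The remaining steps — squaring \eqref{mainAsymptotic} and quoting \cref{simplicesAsymptotic} — are routine.
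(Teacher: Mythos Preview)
Your approach is essentially identical to the paper's: reduce via \eqref{mainAsymptotic} and \cref{simplicesAsymptotic} to the combinatorial estimate $\E_{y_1,\ldots,y_{j-1}}^*S_{j-1}\,\delta_{y_1,\ldots,y_{j-1}}(\xi)=O(q^{2j-d-r_j})$, then induct on $j$ using the same splitting of $\delta_{y_1,\ldots,y_{j-1}}(\xi)$ into the two events $\xi\in\Span(y_1,\ldots,y_{j-2})$ and $y_{j-1}\in\Span(y_1,\ldots,y_{j-2},\xi)$. One slip worth flagging: by definition $\sigma_{y_1,\ldots,y_{j-2}}(y_{j-1})$ encodes $j-1$ constraints, including the quadratic one $|y_{j-1}|^2=|v_{j-1}|^2$, so your claimed equality $\sum_{y_{j-1}}\sigma_{y_1,\ldots,y_{j-2}}(y_{j-1})=q^{j-1}\cdot q^{d-(j-2)}=q^{d+1}$ is false---the correct value is $q^d(1+O(q^{(j-1)-(d+r_{j-1})/2}))$ by \eqref{mainAsymptotic} at $\xi=0$; fortunately $q^{d+1}$ is still a valid (crude) upper bound, and the spurious extra factor of $q$ is absorbed by the slack in $r_j\le r_{j-1}+1$, so your conclusion stands.
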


\begin{proof}
	From \eqref{mainAsymptotic},
	\[
		|\widehat{\sigma}_{y_1, \ldots, y_{j - 1}}(\xi)|^2 = (1 + O(q^{j - (d + r_j)/2}))\delta_{y_1, \ldots, y_{j - 1}}(\xi) + O(q^{2j - d - r_j}),
	\]
	so by applying \cref{simplicesAsymptotic} we see it is enough to establish
	\begin{equation}\label{instead}
		\E_{y_1, \ldots, y_{j - 1}}^* S_{j - 1}(y_1, \ldots, y_{j - 1}) \delta_{y_1, \ldots, y_{j - 1}}(\xi) = O(q^{2j - d - r_j})
	\end{equation}
	which we show by induction on $j \geq 2$.  When $j = 2$, this is obvious from the trivial
	\[
		\sum_{y_1 \in \Span(\xi)} \sigma(y_1) \leq q^2 \leq q^{4 - r_2}.
	\]
	Suppose \eqref{instead} has been established for $j - 1 \geq 2$ in place of $j$.  We apply the bound
	\[
		\delta_{y_1, \ldots, y_{j - 1}}(\xi) \leq \delta_{y_1, \ldots, y_{j - 2}}(\xi) + \delta_{y_1, \ldots, y_{j - 2},\xi}(y_{j - 1}),
	\]
	which breaks our estimate for \eqref{instead} into two cases.  For the first case, we can include $y_{j - 1} \in \Span(y_1, \ldots, y_{j - 2})$ with \cref{dependentError}, apply \eqref{mainAsymptotic}, and invoke our induction hypothesis for
	\[
		\E_{y_1, \ldots, y_{j - 1}}^* S_{j - 1}(y_1, \ldots, y_{j - 1}) \delta_{y_1, \ldots, y_{j - 2}}(\xi) = O(q^{2j - 2 - d - r_{j - 1}}),
	\]
	which is better than required since $r_{j - 1} \geq r_j - 1$.  For the second case, we need to count $y_{j - 1} \in \Span(y_1, \ldots, y_{j - 2},\xi)$ for which $\sigma_{y_1, \ldots, y_{j - 2}}(y_{j - 1}) = q^{j - 1}$.  We have $q$ choices for the $\xi$ coordinate of such $y_{j - 1}$ and, arguing as in \cref{dependentError}, at most 
	\[
		q^{j - 2 - r_{j - 2}} \leq q^{j - r_j}
	\]
	choices for the $y_1, \ldots, y_{j - 2}$ coordinates, since the dot product of $y_{j - 1}$ with each of $y_1, \ldots, y_{j - 2}$ is predetermined.  In total we conclude
	\[
		\sum_{y_{j - 1} \in \Span(y_1, \ldots, y_{j - 2}, \xi)} \sigma_{y_1, \ldots, y_{j - 2}}(y_{j - 1}) \leq q^{2j - r_j},
	\]
	in which case we can apply \cref{simplicesAsymptotic} for
	\[
		\E_{y_1, \ldots, y_{j - 1}}^* S_{j - 1}(y_1, \ldots, y_{j - 1}) \delta_{y_1, \ldots, y_{j - 2},\xi}(y_{j - 1}) = O(q^{2j - d - r_j}),
	\]
	establishing \eqref{instead}.
\end{proof}

To prove that large subsets contain $k$-simplices, we essentially show that, on average, any ordered $(j - 1)$-simplex isometric to $\Delta_{j - 1}$ contained in a set $A$ can be completed into a $j$-simplex isometric to $\Delta_j$ in roughly $|A|q^{-j}$ ways.  One should compare this approach with the simplified proof of Bourgain's result in the Euclidean setting presented in \cite{lyall-magyar}.  \cref{mainResult} follows immediately from the next theorem.

\begin{thm}\label{gvnLemma} Let $A \subseteq \F_q^d$ with $|A| = \alpha q^d$ where $\alpha \geq Cq^{(2k - d - r)/(k + 1)}$ for some sufficiently large constant $C > 0$ depending only upon $k$. Then for $1 \leq j \leq k$,
\begin{equation}\label{gvnCount}
	\mathcal{S}_j(1_A, \ldots, 1_A) = \alpha^{j + 1} + O(\alpha^{(j + 1)/2} q^{j - (d + r_j)/2}).
\end{equation}
\end{thm}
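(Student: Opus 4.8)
The plan is to prove \eqref{gvnCount} by induction on $j$, peeling off the last vertex $y_j$ at each stage. For the base case $j=1$, Fourier inversion gives $\E_x 1_A(x)1_A(x+y_1)=\sum_\xi|\widehat{1_A}(\xi)|^2\chi(\xi\cdot y_1)$, so $\mathcal{S}_1(1_A,1_A)=\sum_\xi|\widehat{1_A}(\xi)|^2\,\E^*_{y_1}\sigma(y_1)\chi(\xi\cdot y_1)$; since $\E^*_{y_1}$ and $\E_{y_1}$ differ only by the term $y_1=0$, of size $O(q^{1-d})$, the estimate \eqref{mainSphereAsymptotic} and Plancherel's identity $\sum_\xi|\widehat{1_A}(\xi)|^2=\alpha$ isolate the $\xi=0$ contribution $\alpha^2$ with remainder $O(\alpha q^{1-(d+r_1)/2})$, as required.

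Now fix $2\le j\le k$ and suppose \eqref{gvnCount} holds for $j-1$. A preliminary observation is that $\mathcal{S}_{j-1}(1_A,\ldots,1_A)=O(\alpha^j)$: the induction hypothesis gives $\mathcal{S}_{j-1}(1_A,\ldots,1_A)=\alpha^j+O(\alpha^{j/2}q^{j-1-(d+r_{j-1})/2})$, and the hypothesis $\alpha\ge Cq^{(2k-d-r)/(k+1)}$ together with the monotonicity $\tfrac{2k-d-r}{k+1}\ge\tfrac{2(j-1)-d-r_{j-1}}{j}$ — which one checks by clearing denominators — forces the error term to be $O(\alpha^j)$. Writing $\vec y=(y_1,\ldots,y_{j-1})$, $G_{\vec y}(x):=1_A(x)\prod_{i=1}^{j-1}1_A(x+y_i)$, and factoring $S_j=S_{j-1}(\vec y)\,\sigma_{\vec y}(y_j)$, I first relax the linear-independence constraint on the full tuple to one on $\vec y$ alone: the discarded terms have $y_j\in\Span(y_1,\ldots,y_{j-1})$, and \cref{dependentError} together with $\E^*_{\vec y}S_{j-1}(\vec y)\E_x G_{\vec y}(x)=\mathcal{S}_{j-1}(1_A,\ldots,1_A)=O(\alpha^j)$ bounds their contribution by $O(\alpha^j q^{2j-1-r_{j-1}-d})$, which is $O(\alpha^{(j+1)/2}q^{j-(d+r_j)/2})$ since $\alpha\le1$ and $d>2j-r_j$. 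Fourier inversion applied to the now-unrestricted sum over $y_j$ yields
\[
\mathcal{S}_j(1_A,\ldots,1_A)=\E^*_{\vec y}S_{j-1}(\vec y)\sum_{\xi\in\F_q^d}\widehat{1_A}(\xi)\,\widehat{G_{\vec y}}(-\xi)\,\widehat\sigma_{\vec y}(-\xi)+O\bigl(\alpha^{(j+1)/2}q^{j-(d+r_j)/2}\bigr),
\]
which I split according to whether $\xi=0$.

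The $\xi=0$ term equals $\alpha\,\E^*_{\vec y}S_{j-1}(\vec y)\,\widehat\sigma_{\vec y}(0)\,\E_x G_{\vec y}(x)$. Since $\widehat\sigma_{\vec y}(0)=\E_x\sigma_{\vec y}(x)\ge0$ while \eqref{mainAsymptotic} at $\xi=0$ gives $|\widehat\sigma_{\vec y}(0)|=1+O(q^{j-(d+r_j)/2})$, one may replace $\widehat\sigma_{\vec y}(0)$ by $1$ at the cost of $O(q^{j-(d+r_j)/2})$ times the nonnegative quantity $\E^*_{\vec y}S_{j-1}(\vec y)\E_x G_{\vec y}(x)=\mathcal{S}_{j-1}(1_A,\ldots,1_A)=O(\alpha^j)$. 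Thus the $\xi=0$ term is $\alpha\,\mathcal{S}_{j-1}(1_A,\ldots,1_A)+O(\alpha^{j+1}q^{j-(d+r_j)/2})=\alpha^{j+1}+O(\alpha^{(j+2)/2}q^{j-1-(d+r_{j-1})/2})+O(\alpha^{j+1}q^{j-(d+r_j)/2})$, and both error terms here are $O(\alpha^{(j+1)/2}q^{j-(d+r_j)/2})$ by an easy comparison using $\alpha\le1$ and $r_{j-1}\le r_j$.

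The main obstacle is the sum over $\xi\neq0$, call it $M_2$. Bounding $\sum_\xi|\widehat{G_{\vec y}}(\xi)|^2=\E_x G_{\vec y}(x)$ by $\alpha$ would only give $O(\alpha q^{j-(d+r_j)/2})$, which is too weak since $\alpha^{(j+1)/2}\le\alpha$; the remedy is to keep $\E_x G_{\vec y}(x)$ inside the average over $\vec y$, where it contributes $\mathcal{S}_{j-1}(1_A,\ldots,1_A)=O(\alpha^j)$ rather than $O(\alpha)$. Concretely, Cauchy--Schwarz in $\xi$ gives
\[
\Bigl|\sum_{\xi\neq0}\widehat{1_A}(\xi)\widehat{G_{\vec y}}(-\xi)\widehat\sigma_{\vec y}(-\xi)\Bigr|\le\Bigl(\sum_{\xi\neq0}|\widehat{1_A}(\xi)|^2|\widehat\sigma_{\vec y}(\xi)|^2\Bigr)^{1/2}\bigl(\E_x G_{\vec y}(x)\bigr)^{1/2},
\]
and Cauchy--Schwarz in $\vec y$ against the nonnegative weight $S_{j-1}(\vec y)$ bounds $|M_2|^2$ by the product of $\E^*_{\vec y}S_{j-1}(\vec y)\sum_{\xi\neq0}|\widehat{1_A}(\xi)|^2|\widehat\sigma_{\vec y}(\xi)|^2$ and $\E^*_{\vec y}S_{j-1}(\vec y)\E_x G_{\vec y}(x)=O(\alpha^j)$. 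In the first factor, swapping the order of summation and invoking \cref{errorlemma} for each $\xi\neq0$, followed by Plancherel's identity, gives $\sum_{\xi\neq0}|\widehat{1_A}(\xi)|^2\cdot O(q^{2j-d-r_j})=O(\alpha q^{2j-d-r_j})$. Hence $|M_2|=O(\alpha^{(j+1)/2}q^{j-(d+r_j)/2})$, which closes the induction.
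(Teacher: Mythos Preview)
Your proof is correct and follows essentially the same approach as the paper's: induction on $j$, with the inductive step separating the contribution of the last vertex into a main term and an error term, the latter controlled via Cauchy--Schwarz and \cref{errorlemma}. The only cosmetic difference is that the paper writes $1_A=\alpha+f_A$ for the last factor and bounds $\mathcal{S}_j(1_A,\ldots,1_A,f_A)$ by Cauchy--Schwarz in $(x,\vec y)$ against the weight $S_{j-1}(\vec y)G_{\vec y}(x)$, whereas you Fourier-expand directly and split on $\xi=0$ versus $\xi\neq 0$, applying Cauchy--Schwarz first in $\xi$ and then in $\vec y$; since $\widehat{f_A}(0)=0$ and $\widehat{f_A}(\xi)=\widehat{1_A}(\xi)$ for $\xi\neq 0$, these are the same decomposition. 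One small quibble: in absorbing the error $O(\alpha^{(j+2)/2}q^{j-1-(d+r_{j-1})/2})$ into $O(\alpha^{(j+1)/2}q^{j-(d+r_j)/2})$ you cite ``$r_{j-1}\le r_j$'', but what is actually needed is $r_j-r_{j-1}\le 2$ (indeed $r_j-r_{j-1}\in\{0,1\}$), so that the exponent $-1+(r_j-r_{j-1})/2$ is nonpositive.
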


\begin{proof}
	We will proceed by induction on $j \geq 1$.  We start by setting $f_A = 1_A - \alpha$ and apply Plancherel for 
	\[
		\mathcal{S}_1(1_A, f_A) = \E_x 1_A(x) f_A * \sigma(x) = \sum_{\xi \in \mathbf{F}_q^d} |\widehat{f}_A(\xi)|^2 \widehat{\sigma}(\xi).
	\]	
	Since $\widehat{f}_A(0) = 0$, we can apply the triangle inequality, \eqref{mainSphereAsymptotic} and Plancherel for
	\[
		|\mathcal{S}_1(1_A, f_A)| = O(\alpha q^{1 - (d + r_1)/2}).
	\]
	The theorem then follows for $j = 1$ from the decomposition
	\begin{align*}
		\mathcal{S}_1(1_A, 1_A) &= \alpha \mathcal{S}_1(1_A, \mathbf{1}) + \mathcal{S}_1(1_A, f_A)\\
		&= \alpha \E_x 1_A(x) \E_y \sigma(y) + O(\alpha q^{1 - (d + r_1)/2})\\
		&= \alpha^2 + O(\alpha q^{1 - (d + r_1)/2}).
	\end{align*}
	In order to establish the theorem for $j \geq 2$, suppose \eqref{gvnCount} has been established with $j - 1$ in place of $j$.  We apply the decomposition
	\[
		\mathcal{S}_j(1_A, \ldots, 1_A) = \alpha \mathcal{S}_j(1_A, \ldots, 1_A, \mathbf{1}) + \mathcal{S}_j(1_A, \ldots, 1_A, f_A)
	\]
	and handle each term separately.  Arguing as in \cref{simplicesAsymptotic},
	\[
		\mathcal{S}_j(1_A, \ldots, 1_A,\mathbf{1}) = (1 + O(q^{j - (d + r_j)/2}) \mathcal{S}_{j - 1}(1_A, \ldots, 1_A),
	\]
	so by our induction hypothesis
	\[
		\alpha \mathcal{S}_j(1_A, \ldots, 1_A, \mathbf{1}) = \alpha^{j + 1} + O(\alpha^{j/2 + 1} q^{j - 1 - (d + r_{j - 1})/2}) + O(\alpha^{j + 1} q^{j - (d + r_j)/2}).
	\]
	As $r_{j - 1} \geq r_j - 1$, this is slightly stronger than the required
	\[
		\alpha \mathcal{S}_j(1_A, \ldots, 1_A, \mathbf{1}) = \alpha^{j + 1} + O(\alpha^{(j + 1)/2} q^{j - (d + r_j)/2}).
	\]
	We still need to show that $\mathcal{S}_j(1_A, \ldots, 1_A, f_A)$ contributes an acceptable error.  By rearranging and applying the triangle inequality, $|\mathcal{S}_j(1_A, \ldots, 1_A, f_A)|$ is at most
	\[
		\E_{y_1, \ldots, y_{j - 1}}^* S_{j - 1}(y_1, \ldots, y_{j - 1}) \E_x 1_A(x)1_A(x + y_1) \cdots 1_A(x + y_{j - 1}) |f_A * \tilde{\sigma}_{y_1, \ldots, y_{j - 1}}(x)|
	\]
	where here we let $\tilde{\sigma}_{y_1, \ldots, y_{j - 1}}(y_j) = \sigma_{y_1, \ldots, y_{j - 1}}(-y_j)$.
	Applying Cauchy-Schwarz, $|\mathcal{S}_j(1_A, \ldots, 1_A, f_A)|^2$ is at most
	\[
		\mathcal{S}_{j - 1}(1_A, \ldots, 1_A) \E_{y_1, \ldots, y_{j - 1}}^* S_{j - 1}(y_1, \ldots, y_{j - 1}) \E_x |f_A * \tilde{\sigma}_{y_1, \ldots, y_{j - 1}}|^2.
	\]
	By Plancherel, we have
	\[
		\E_x |f_A * \tilde{\sigma}_{y_1, \ldots, y_{j - 1}}|^2 = \sum_{\xi \in \mathbf{F}_q^d} |\widehat{f}_A(\xi)|^2 |\widehat{\sigma}_{y_1, \ldots, y_{j - 1}}(\xi)|^2,
	\]	
	in which case we can rearrange and apply \cref{errorlemma} and Plancherel for
	\[
		\E_{y_1, \ldots, y_{j - 1}}^* S_{j - 1}(y_1, \ldots, y_{j - 1}) \E_x |f_A * \tilde{\sigma}_{y_1, \ldots, y_{j - 1}}|^2 = O(\alpha q^{2j - d - r_j}).
	\]
	Invoking our induction hypothesis, we have shown
	\[
		|\mathcal{S}_j(1_A, \ldots, 1_A, f_A)|^2 = O(\alpha^{j + 1} q^{2j - d - r_j}) + O(\alpha^{j/2 + 1} q^{2j - d - r_j} q^{j - 1 - (d + r_{j - 1})/2}).
	\]
	By our hypothesis that $\alpha$ is not too small, the $O(\alpha^{j + 1} q^{2j - d - r_j})$ error dominates, completing the proof.
	\end{proof}
	
\textit{Acknowledgments.} The author thanks Neil Lyall and \'{A}kos Magyar for many helpful conversations.  This work also benefited from useful comments by Giorgis Petridis and the anonymous referee.

\bibliography{ff-simplices}{}
\bibliographystyle{amsplain}

\end{document}